\newtheorem{thm}{Theorem}[section]
\newtheorem{lem}[thm]{Lemma}
\newtheorem{rem}[thm]{Remark}
\newtheorem{ex}[thm]{Example}
\theoremstyle{definition}
\numberwithin{equation}{section}
\newcommand{\Z}{\mathbb{Z}}
\newcommand{\ol}{\overline}
\begin{document}

\title{$3$-pyramidal Steiner Triple Systems
\footnote{Research performed within the activity of INdAM--GNSAGA with the
financial support of the italian Ministry MIUR,
project ``Combinatorial Designs, Graphs and their Applications''}}

\author{Marco Buratti \thanks{Dipartimento di Matematica e Informatica, Universit\`a di Perugia, via Vanvitelli 1 - 06123 Italy, email: buratti@dmi.unipg.it}\quad\quad
Gloria Rinaldi
\thanks{ Dipartimento di Scienze e Metodi dell'Ingegneria,
Universit\`a di Modena e Reggio Emilia, via Amendola, Italy. email: gloria.rinaldi@unimore.it}\quad\quad
Tommaso Traetta
\thanks{Department of Mathematics, Ryerson University, Toronto (ON) M5B 2K3, Canada,
email: tommaso.traetta@ryerson.ca, traetta.tommaso@gmail.com}}

\maketitle
\bigskip
\begin{abstract}
\noindent
A design is said to be $f$-pyramidal
when it has an automorphism group which fixes $f$ points and acts sharply transitively
on all the others. The problem of establishing the set of values of $v$ for which there exists an\break $f$-pyramidal
Steiner triple system of order $v$ has been deeply investigated in the case $f=1$ but it remains open for a special class of values of $v$.
The same problem for the next possible $f$, which is $f=3$, is here completely solved: there exists a $3$-pyramidal Steiner triple system of order $v$ if and only
if $v\equiv7,9,15$ (mod $24$) or $v\equiv3,19$ (mod 48).
\end{abstract}

\small\noindent {\textbf{Keywords:}} Steiner triple system; group action; difference family; Skolem sequence; Langford sequence.

\bigskip\bigskip\bigskip\bigskip\bigskip\bigskip\bigskip\bigskip\bigskip\bigskip\bigskip\bigskip\bigskip\bigskip\bigskip\bigskip\bigskip

\eject
\normalsize
\section{Introduction}
\noindent
A Steiner triple system of order $v$, briefly STS$(v)$, is a pair $(V,{\cal B})$ where $V$ is a set of $v$ points and
$\cal B$ is a set of 3-subsets ({\it blocks} or {\it triples}) of $V$ with the property that any two distinct points are contained in
exactly one block. Apart from the trivial case $v=0$ in which both $V$ and $\cal B$ are empty, it is well known that a STS$(v)$ exists if
and only if $v\equiv 1$ or 3 (mod 6).
For general background on STSs we refer to \cite{CR}.

Steiner triple systems having an automorphism with a prescribed property or an automorphism group
with a prescribed action have drawn much attention since a long time.  It was proved by Peltesohn \cite{P}
that a STS$(v)$ with an automorphism cyclically permuting all points,
briefly a {\it cyclic} STS$(v)$, exists for any possible $v$ but $v\neq9$.
The existence question for a STS$(v)$ with an involutory automorphism fixing exactly one point,
briefly a {\it reverse} STS$(v)$, has been settled by means of three different contributions of Doyen \cite{D},
Rosa \cite{R} and Teirlinck \cite{T}; it exists if and only if $v\equiv1, 3, 9, 19$ (mod 24).
In \cite{PR} Phelps and Rosa proved that there exists a STS$(v)$ with an automorphism cyclically permuting all but
one point, briefly a {\it $1$-rotational} STS$(v)$, if and only if $v\equiv3$ or 9 (mod 24).

Note that a cyclic or 1-rotational STS may be viewed as a STS with a cyclic automorphism group
acting sharply transitively on all points or all but one point, respectively. Thus one may ask, more generally, for
a STS with an automorphism group $G$ having the same kind of action without the request that $G$ be cyclic.

Speaking of a regular STS$(v)$ we mean a STS$(v)$ for which there is at least one group
$G$ acting sharply transitively on the points. Also, speaking of a 1-rotational STS$(v)$
we mean a STS$(v)$ with at least one automorphism group acting regularly on all but one points.

The only STS(9), which is the point-line design
associated with the affine plane over $\Z_3$, is clearly regular under $\Z_3^2$.
Thus, in view of Peltesohn's result, there exists a regular STS$(v)$ for any admissible $v$.

The problem of determining the set of values of $v$ for which there exists a 1-rotational
STS$(v)$ (under some group) has been deeply investigated in \cite{B,BBRT}. Such a STS is necessarily
reverse so that $v$ must be congruent to 1, 3, 9, or 19 (mod 24). For the cases $v\equiv3$ or 9 (mod 24)
the existence clearly follows from the result by Phelps and Rosa. In the case $v\equiv19$ (mod 24) we do not have
existence only when $v=6PQ+1$ with $P=1$ or a product of pairwise distinct primes congruent to 5 (mod 12) and
with $Q$ a product of an odd number of pairwise distinct primes congruent to 11 (mod 12).
The most difficult case is $v\equiv1$ (mod 24) where the existence remains still uncertain only when all
the following conditions are simultaneously satisfied: $v=(p^3-p)n+1\equiv1$ (mod 96) with $p$ a prime;
$n\not\equiv0$ (mod 4); the odd part of $v-1$ is square-free and without prime factors $\equiv1$ (mod 6).

Of course one might consider the more specific problems of determining all groups $G$ for which
there exists a STS which is regular under $G$ and all groups $G$ for which there exists
a STS which is 1-rotational under $G$. These problems appear at this moment quite hard.
The same problems can be relaxed by asking for which $v$ there is a STS$(v)$ which is regular
(1-rotational) under a group belonging to an assigned general class.
For instance, the first author proved that there exists a STS$(v)$ which is 1-rotational
under some abelian group if and only if either $v\equiv3, 9$ (mod 24) or $v\equiv1,19$ (mod 72).
Also, Mishima \cite{M} proved that there exists a STS$(v)$ which is regular
under a dicyclic group if and only if $v\equiv9$ (mod 24).

We have quoted only the results which are closer to the problem that we are going to study in this paper. Indeed
the literature on Steiner triple systems and their automorphism groups is quite wide. For example, results
concerning the {\it full} automorphism group of a STS have been obtained by Mendelsohn \cite{M} and Lovegrove \cite{L}.

Now we want to consider the problem of determining the set of values of $v$
for which there exists a STS$(v)$ with an automorphism group fixing $f$
points and acting sharply transitively on the other $v-f$ points. Such a STS will
be called {\it $f$-pyramidal}.
Of course the cases $f=0$ and $f=1$ correspond,
respectively, to the {\it regular} and 1-{\it rotational} STSs discussed above.
It is natural to study the next possible case that is $f=3$. This is because, as we are going to see in the next lemma,
the fixed points of an $f$-pyramidal STS$(v)$ form a subsystem of order $f$ so that, for $f\neq0$,
we have $f\equiv1$ or 3 (mod 6).

\begin{lem}\label{f}
A necessary condition for the existence of an $f$-pyramidal STS$(v)$ is that
$f=0$ or $f\equiv1, 3$ $($mod $6)$.
\end{lem}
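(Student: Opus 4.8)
The plan is to show that the set $F$ of the $f$ points fixed by the automorphism group $G$ forms a subsystem of the STS, and then to invoke the classical necessary condition on the order of any Steiner triple system. Since a subsystem is itself a STS$(f)$, its order must satisfy the usual divisibility constraint, and this yields the stated conclusion.

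First I would take any two distinct fixed points $x,y\in F$ and consider the unique block $B=\{x,y,z\}$ through them. For every $g\in G$, the image $g(B)$ is again a block, and since $g$ fixes both $x$ and $y$, the block $g(B)$ still contains $x$ and $y$. By the defining property of a STS the block through $x$ and $y$ is unique, so $g(B)=B$. Thus $g$ restricts to a permutation of the three-element set $\{x,y,z\}$ fixing $x$ and $y$, and such a permutation necessarily fixes $z$ as well. Hence $z$ is fixed by every element of $G$, that is $z\in F$, and therefore $B\subseteq F$. Note that only the fact that the $f$ points are fixed pointwise by $G$ is used here; the sharp transitivity of $G$ on the remaining points plays no role in this step.

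This shows that the blocks entirely contained in $F$ cover every pair of points of $F$, so the pair $(F,\mathcal{B}_F)$ with $\mathcal{B}_F=\{B\in\mathcal{B}:B\subseteq F\}$ is itself a Steiner triple system of order $f$. By the well-known existence condition for STSs recalled in the introduction, a STS of order $f$ forces either $f=0$ or $f\equiv1,3\pmod 6$, which is exactly the asserted necessary condition.

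The only genuinely substantive step is the closure argument producing $z\in F$; once the subsystem is identified, the conclusion is immediate from the standard constraint $f\equiv1,3\pmod 6$ on the order of a nonempty STS. I expect no real obstacle, since the argument reduces to the uniqueness of blocks together with the elementary observation that a permutation of a triple fixing two of its points fixes the third.
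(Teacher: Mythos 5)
Your proof is correct and is essentially the same as the paper's: both use the uniqueness of the block through two fixed points to show it is stabilized by $G$, hence its third point is also fixed, so the blocks inside $F$ form a subsystem STS$(f)$ and the classical congruence condition applies. The only cosmetic difference is that the paper phrases the argument in additive notation after identifying $V$ with $F\cup G$, whereas you work directly with the abstract group action.
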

\begin{proof}
Let $(V,{\cal B})$ be an $f$-pyramidal STS$(v)$
under the action of a group $G$. Assume, w.l.o.g., that $G$ is additive and
let $F = \{\infty_1,\dots,\infty_f\}$ be the set of points fixed by $G$. Obviously
$G$ has order $v-f$, the set of points $V$ can be identified with $F \ \cup \ G$,
and the action of $G$ on $V$ can be identified with the addition on the right with the assumption that
$\infty_i+g=\infty_i$ for each $\infty_i\in F$ and each $g\in G$.
If a block $B\in{\cal B}$ contains two distinct fixed points, say $\infty_i$ and $\infty_j$,
then $B+g=B$ for every $g\in G$ otherwise $\cal B$ would have two
distinct blocks, $B$ and $B+g$, passing through the two points $\infty_i$ and $\infty_j$.
So, the third vertex of $B$ is also fixed by $G$. It easily follows that all blocks of $\cal B$
contained in $F$ form a STS$(f)$ so that we have $f=0$ or $f\equiv1, 3$ (mod 6).
\end{proof}

The main result of this paper is a complete solution to the existence problem
for a 3-pyramidal STS$(v)$.

\begin{thm}\label{main}
There exists a $3$-pyramidal STS$(v)$ if and only if $v\equiv7, \ 9, \ 15$ $($mod $24)$
or $v\equiv 3, \ 19$ $($mod $48)$.
\end{thm}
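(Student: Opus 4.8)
The plan is to prove both implications by reducing the problem to the existence of a suitable difference family over a group $G$ of order $n:=v-3$. Following the setup of Lemma \ref{f}, I identify the point set with $F\cup G$, where $F=\{\infty_1,\infty_2,\infty_3\}$ is the fixed block and $G$ acts by right translation. The blocks then split into the fixed block, the \emph{flush} blocks $\{\infty_i,x,y\}$ meeting $F$ in one point, and the blocks contained in $G$. A short orbit count shows that the flush blocks through a fixed $\infty_i$ must form a single $G$-orbit of length $n/2$, hence have the shape $\{\infty_i,x,xj_i\}$ for an involution $j_i\in G$, and that a block inside $G$ developed over a full orbit cannot carry an involution among its differences (it would be covered twice). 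Consequently $G$ must possess \emph{exactly three} involutions $j_1,j_2,j_3$, one serving each $\infty_i$, while the remaining non-involutory differences of $G$ get partitioned into the difference sets of the blocks contained in $G$ (a full orbit contributing three mutually inverse difference pairs, an order-$3$ short orbit absorbing one such pair).

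For the \textbf{necessity} I would first record that $v$ is odd with $v\equiv1,3\pmod 6$, and then extract the finer congruences by a parity argument. For any homomorphism $\chi\colon G\to\Z_2$ the three left differences of a full base block telescope to $0$ in $\Z_2$, so each full orbit covers an even number of inverse pairs on which $\chi=1$, while order-$3$ orbits cover none. Matching this against the $n/2$ elements of $G$ on which $\chi=1$ yields
\begin{equation*}
\tfrac{n}{2}\equiv\varepsilon(\chi)\pmod 4,\qquad \varepsilon(\chi)=\#\{i:\chi(j_i)=1\},
\end{equation*}
a constraint that must hold for \emph{every} such $\chi$. Splitting into the two possible shapes of the involution set---either $\{1,j_1,j_2,j_3\}\cong\Z_2^2$, forcing $\varepsilon\in\{0,2\}$, or $\langle j_1,j_2\rangle\cong S_3$ with the $j_i$ conjugate, forcing $\varepsilon\in\{0,3\}$---and testing this against all $\chi$ rules out the case $v_2(n)=3$ and the ``wrong'' halves modulo $48$, leaving precisely the stated residues. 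The delicate point here is the exhaustive case analysis, in particular proving that no group with $v_2(n)=3$ can force $\varepsilon$ to vanish identically.

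For the \textbf{sufficiency} I would, for each admissible residue class, select an explicit group with exactly three involutions---$S_3\times\Z_{n/6}$ when $n\equiv6\pmod{24}$, $\Z_2^2\times\Z_{n/4}$ when $v_2(n)=2$, and a group whose $2$-part has order at least $16$ and exactly three involutions (for instance $\Z_4^2$ or $\Z_8\times\Z_2$ times an odd factor) when $v_2(n)\ge4$---and then assemble the difference family: the fixed block, the three short orbits $\{\infty_i,0,j_i\}$, one order-$3$ short orbit whenever $3\mid n$, and full base blocks realizing all surviving differences. The core task is to partition these remaining differences of $G$ into difference triples, that is, triples summing to $0$; this is exactly where Skolem and Langford sequences enter, their defining partitions translating directly into the required base blocks and explaining why the construction succeeds precisely on the allowed classes.

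I expect the \textbf{main obstacle} to be this final constructive step: realizing every residue class uniformly through Skolem/Langford-type sequences over the chosen groups, which in the non-abelian cases such as $S_3\times\Z_m$ means handling simultaneously a translation part and a twisted part of the difference set, together with the finitely many small orders for which the generic sequences break down and a direct ad hoc construction is needed.
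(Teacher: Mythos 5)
Your overall strategy is the same as the paper's: reduce to a $(G,\{2^3,3^e\},3,1)$-difference family over a group $G$ of order $n=v-3$ with exactly three involutions (the paper's Theorem \ref{DF}), kill the bad residue classes by a parity count over index-$2$ subgroups, and build the families in the good classes from Skolem/Langford sequences. Your parity identity $n/2\equiv\varepsilon(\chi)\pmod 4$ is correct and is exactly the paper's ``each base block has $0$ or $4$ differences outside an index-$2$ subgroup'' argument, rephrased through characters $\chi\colon G\to\Z_2$. But the proposal has two genuine gaps, one in each direction.

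In the necessity direction, the whole difficulty sits in the classes $v\equiv27,43\pmod{48}$, where $n/2\equiv0\pmod4$ and your test only yields a contradiction if you can produce \emph{some} $\chi$ with $\varepsilon(\chi)\neq0$, i.e.\ an index-$2$ subgroup missing at least one involution. This is not a routine case check: an involution can lie in the kernel of \emph{every} character of $G$ (in $Q_8$ the unique involution lies in the commutator subgroup), so you must prove that the Klein group generated by the three involutions is not contained in every index-$2$ subgroup, and you must separately exclude the $S_3$-shaped involution set when the Sylow $2$-subgroup has order $8$. The paper isolates precisely this statement as Lemma \ref{Tom} (an index-$2$ subgroup containing exactly one involution exists), and its proof requires real group theory: Burnside's normal $p$-complement theorem, the $N/C$-theorem, and conjugacy of involutions inside normal subgroups of even order. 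Labelling this ``the delicate point'' without an argument leaves the non-existence proof open exactly where it is hardest. A further inaccuracy: for $v\equiv1\pmod{24}$ the parity test is \emph{satisfiable} ($n/2\equiv3\equiv\varepsilon(\chi)\pmod4$), so ``testing against all $\chi$'' does not exclude that class; it dies because no group of order $n\equiv22\pmod{24}$ has exactly three involutions (neither $4\mid n$ nor $6\mid n$, so neither shape in your dichotomy can occur — equivalently, the paper's Sylow-counting argument). Your dichotomy contains this fact, but you credit the wrong mechanism.

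In the sufficiency direction the proposal is a plan, not a proof: no difference family is actually exhibited. The paper's Section 4 is where most of the labour lies — explicit families over $\Z_2^2\times H$ using the patterned starter, a three-case table of base blocks over $\mathbb{D}_6\times\Z_{4n+1}$, a lemma splicing an extended Skolem sequence with an extended Langford sequence to get a $(\Z_{12n},\{3,4\},3,1)$-DF for even $n$ (with seven sporadic orders done by hand), and separate Skolem-based families over $\Z_4\times\Z_{12n}$ and $\Z_4\times\Z_{12n+4}$. Until constructions of this kind are written down and verified for every admissible class, the ``only if'' half of the theorem remains unproven; acknowledging the obstacle does not discharge it.
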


The ``if part" of this theorem will be proved in Section 3 which therefore will give non-existence
results: for $v\equiv1, \ 13, \ 21$ (mod 24) or $v\equiv27, \ 43$ (mod 48) there is no 3-pyramidal STS$(v)$.
The ``only if part" will be proved in Section 4 where we will give an explicit
construction of a 3-pyramidal STS$(v)$ whenever
$v\equiv7, 9, 15$ $($mod $24)$ or $v\equiv 3, 19$ $($mod $48)$.

First, in the next section, we have to translate our problem into algebraic terms: any
$f$-pyramidal STS is completely equivalent to a suitable {\it difference family}.

\vskip 0.5truecm

\section{Difference families and pyramidal STSs}

As a natural generalization of the concept of a {\it relative difference set} \cite{Pott}, the first author
introduced  \cite{recursive} {\it difference families} in a group $G$ relative to a subgroup of $G$  or,
even more generally \cite{B2}, relative to a {\it partial spread} of $G$.
By a partial spread of a group $G$ one means a set $\Sigma$ of subgroups of $G$ whose mutual intersections
are all trivial. One omits the attribute ``partial" in the special case that the subgroups of $\Sigma$
cover all $G$. Let $\Sigma$ be a partial spread of an additively written group $G$, let $\cal F$ be a set of $k$-subsets of $G$, and let
$\Delta{\cal F}$ be the list of all possible differences $x-y$ with $(x,y)$ an ordered pair of distinct elements
of a member of $\cal F$. One says that $\cal F$ is a $(G,\Sigma,k,1)$-difference family (DF) if every group element
appears 0 or 1 times in $\Delta{\cal F}$ according to whether it belongs or does not belong to
some member of $\Sigma$, respectively.
We say that $\Sigma$ is of {\it type} $\{d_1^{e_1}, \dots, d_n^{e_n}\}$ if this is the multiset  (written
in ``exponential" notation) of the orders of all subgroups
belonging to $\Sigma$ and we speak of a $(G,\{d_1^{e_1}, \dots, d_n^{e_n}\},k,1)$-DF.

It is obvious that any STS$(v)$ is $v$-pyramidal under the trivial group. The following theorem explains how to
construct an $f$-pyramidal STS$(v)$ with $f<v$. It generalizes Theorem 1.1 in [3] which corresponds to the case $f=1$.

\begin{thm}\label{DF}
There exists an $f$-pyramidal STS$(v)$ with $f<v$ if and only if
there exists a $(G,\{2^f,3^e\},3,1)$-DF for a suitable group $G$ of order $v-f$ with exactly $f$ involutions,
and a suitable integer $e$.
\end{thm}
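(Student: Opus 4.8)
The plan is to prove both implications through a careful analysis of the $G$-orbits of blocks, adopting the setup from the proof of Lemma \ref{f}: I identify the point set with $F \cup G$, where $F = \{\infty_1, \dots, \infty_f\}$ is the set of fixed points and $G$ acts by right translation with $\infty_i + g = \infty_i$.

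\textbf{From an STS to a DF.} Given an $f$-pyramidal STS$(v)$, I would first sort the blocks into three classes: those contained in $F$ (the subsystem STS$(f)$ of Lemma \ref{f}), the \emph{mixed} blocks meeting $F$ in exactly one point, and the \emph{pure} blocks contained in $G$. The crux is to understand the mixed blocks: fixing $\infty_i$ and a point $x \in G$, the unique block on them has the form $\{\infty_i, x, x + c_i\}$ for a constant $c_i \in G$ (forced by $G$-invariance after translating to $x=0$), and applying uniqueness to the block through $\infty_i$ and $x + c_i$ shows $2c_i = 0$, so $c_i =: j_i$ is an involution. Thus the blocks through $\infty_i$ form a single orbit of length $|G|/2$ with representative $\{\infty_i, 0, j_i\}$ and stabilizer $\langle j_i\rangle$. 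A short argument using the pair $\{0, j\}$ shows the map $\infty_i \mapsto j_i$ is a bijection onto the involutions of $G$, so $G$ has exactly $f$ involutions. Next I would examine the pure blocks: any nontrivial stabilizer element acts on the three points without fixed points, hence as a $3$-cycle, so each stabilizer has order $1$ or $3$; the short orbits are precisely the coset partitions of the order-$3$ subgroups $T_1, \dots, T_e$. Taking $\mathcal{F}$ to be a transversal of the full (trivial-stabilizer) pure orbits and $\Sigma = \{\langle j_1\rangle, \dots, \langle j_f\rangle, T_1, \dots, T_e\}$, which is a partial spread of type $\{2^f, 3^e\}$, I would verify the DF condition by a single count of point-pairs of $G$: pairs whose difference is an involution are covered exactly by the mixed blocks, pairs whose difference lies in some $T_\ell$ are covered exactly by the cosets of $T_\ell$, and each remaining difference is covered exactly once by the translates of $\mathcal{F}$. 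This says precisely that each element of $\bigcup \Sigma$ occurs $0$ times and every other element occurs once in $\Delta\mathcal{F}$.

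\textbf{From a DF to an STS.} Conversely, given $G$ with exactly $f$ involutions $j_1, \dots, j_f$ and a $(G, \{2^f, 3^e\}, 3, 1)$-DF $\mathcal{F}$, I would note that the $f$ order-$2$ members of $\Sigma$ must be exactly the subgroups $\langle j_i\rangle$, and then reverse the construction: place a fixed STS$(f)$ on $F$ (which exists for the admissible values of $f$, and trivially when $f=3$), pair $\infty_i$ with $j_i$, and take as blocks (i) that STS$(f)$, (ii) all $\{\infty_i, x, x + j_i\}$ with $x \in G$, (iii) all cosets of the order-$3$ members $T_1, \dots, T_e$ of $\Sigma$, and (iv) all translates $B + x$ of the base blocks $B \in \mathcal{F}$. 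Running the coverage bookkeeping of the previous paragraph in reverse shows that every pair of points lies in exactly one block, so this is an STS$(v)$, and it is $f$-pyramidal by construction.

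\textbf{Main obstacle.} The delicate point in both directions is the exact-coverage bookkeeping: one must match orbit lengths to subgroup orders (stabilizers of mixed and pure blocks have order exactly $2$ and $1$ or $3$ respectively) and then check that the mixed blocks, the order-$3$ cosets, and the translates of $\mathcal{F}$ partition the point-pairs of $G$ with no overlaps and no omissions. The identity that makes the mixed blocks behave, namely $x + j_i = x - j_i$ since $j_i$ is an involution, is exactly what guarantees that each pair $\{\infty_i, x\}$ lies in a single block and that the correspondence between fixed points and involutions is forced; establishing this bijection, and hence the spread type $\{2^f, 3^e\}$, is the heart of the argument.
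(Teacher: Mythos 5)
Your proposal is correct and takes essentially the same approach as the paper: both directions identify the fixed points with the involutions of $G$ via the translation/uniqueness argument, classify block orbits by their stabilizers (order $2$ for mixed blocks, order $1$ or $3$ for pure ones), extract the difference family from a transversal of the trivial-stabilizer orbits, and reverse the construction for the converse. The only difference is one of exposition: you spell out the orbit and pair-coverage bookkeeping that the paper delegates to the proof of Theorem 2.2 in \cite{B}.
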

\begin{proof}
($\Longrightarrow$). Let $(V,{\cal B})$ be an $f$-pyramidal STS$(v)$
under an additive group $G$. We can assume that $V=F \ \cup \ G$ with $F$ and the action of $G$ on $V$ defined as in Lemma \ref{f}.
For $1\leq i\leq f$, let $B_i=\{\infty_i,0,x_i\}$ be the block of $\cal B$ containing the points $\infty_i$ and $0$.
We have $B_i-x_i=\{\infty_i,-x_i,0\}$ so that both $B_i$ and $B_i-x_i$ contain the points $\infty_i$ and $0$.
It necessarily follows that $B_i-x_i=B_i$, hence $-x_i=x_i$ which means that $x_i$ is an involution.
Conversely, if $x$ is an involution of $G$ and $B=\{0,x,y\}$ is the block through $0$ and $x$, then
$B+x=\{x,0,y+x\}$ would also contain $0$ and $x$ so that $B+x=B$. This means that $y+x=y$
and this is possible only if $y\in F$. Hence $y=\infty_i$ and $x=x_i$ for a suitable $i$.
We conclude that $\{x_1,...,x_f\}$ is the set of all involutions of $G$.

 Let $\cal F$ be  a complete  system of representatives for the $G$-orbits on the blocks of $\cal B$
 with trivial $G$-stabilizer. Reasoning as in the ``if part" of Theorem 2.2 in \cite{B}, one can see that
$\cal F$ is a $(G,\Sigma,3,1)$-DF where
$\Sigma$ is the partial spread of $G$ consisting of all 2-subgroups $\{0,x_i\}$ ($i=1,\dots,f$) of $G$
and all 3-subgroups of $G$ belonging to $\cal B$.

($\Longleftarrow$). Now assume that $f\equiv1$ or 3 (mod 6) and that
$\cal F$ is a $(G,\Sigma,3,1)$-DF with $G$ a group of order $v-f$ having
exactly $f$ involutions and with $\Sigma$ a partial spread of $G$ of type $\{2^f,3^e\}$.

Take an $f$-set $F=\{\infty_1,\dots,\infty_f\}$ disjoint with $G$ and let $(F,{\cal B}_{\infty})$
be any STS$(f)$ (which exists because we assumed that $f\equiv1$ or 3 (mod 6)).
For $i=2, 3$, let $\Sigma_i$ be the set of
subgroups of order $i$ belonging to $\Sigma$.
Set $\Sigma_2=\{S_i \ |  \ 1\leq i\leq f\}$ and $\Sigma_2^+=\{S_i \ \cup \ \{\infty_i\} \ | \ 1\leq i\leq f\}$.
Then, as in the ``only if part" of Theorem 2.2 in \cite{B}, one can see that $$\Sigma_2^+ \ \cup \ \Sigma_3 \ \cup \ \ {\cal F} \ \cup \ {\cal B}_\infty$$
is a complete system of representatives for the block-orbits of a 3-pyramidal STS$(v)$ under the action of $G$ on $F \ \cup \ G$ defined as in the
proof of Lemma \ref{f}.   \end{proof}

\begin{rem}\label{rem}
{\rm Considering that ``cyclic STS" means ``0-pyramidal STS under the cyclic group",
as a very special case of the above theorem we have the well known fact
that any cyclic STS$(6n+1)$ is equivalent to a $(\Z_{6n+1},\{1\},3,1)$-DF
and that any cyclic STS$(6n+3)$ is equivalent to a $(\Z_{6n+3},\{3\},3,1)$-DF.}
\end{rem}

\begin{ex}
The empty-set clearly is a $(\Z_2^n,\{2^{2^n-1}\},3,1)$-DF since every non-zero element of $\Z_2^n$ is an involution. It is not difficult to see that
the associated $(2^n-1)$-pyramidal STS$(2^{n+1}-1)$ is the point-line design of the
$n$-dimensional projective geometry over $\Z_2$.
\end{ex}

Let $\mathbb{D}_{2n}$ be the dihedral group of order $2n$, namely the group
with defining relations $\mathbb{D}_{2n}= \langle x, y \ | \ y^2=x^n=1;  \ yx=x^{-1}y\rangle$.
We give here an example of a STS$(3f)$ which is $f$-pyramidal under $\mathbb{D}_{2f}$.

\begin{ex}
{\rm
Let $f\equiv1$ or 3 (mod 6) but $f\neq9$.
Let $\phi: \Z_f\longrightarrow \mathbb{D}_{2f}$ be the group monomorphism defined by $\phi(i)=x^i$ for each $i\in\Z_f$.
The hypotesis on $f$ guarantees, in view of Peltesohn's result, that there exists a cyclic STS$(f)$. Thus, by Remark \ref{rem},
there exists a $(\Z_f,\{1\},3,1)$-DF or a $(\Z_f,\{3\},3,1)$-DF $\cal F$
according to whether $f\equiv1$ or $3$ (mod 6), respectively.
It is then obvious that $\{\phi(B) \ | \ B\in{\cal F}\}$ is a $(\mathbb{D}_{2f},\{2^f,3^e\},3,1)$-DF with $e=0$ or $1$, respectively.

Thus there exists an $f$-pyramidal STS$(3f)$ under the dihedral group $\mathbb{D}_{2f}$
for any $f\equiv1$ or $3$ $($mod $6)$ but $f\neq9$.}
\end{ex}

If, in the above example, we put $f=3$, we obtain a representation of the affine plane of order 3 as a 3-pyramidal STS(9) under $\mathbb{D}_{6}$.
In this case the difference family $\cal F$ is empty because it is relative to a spread which is not partial; its subgroups $\{1,y\}$, $\{1,xy\}$, $\{1,x^2y\}$
and $\{1,x,x^2\}$ cover indeed all elements of $\mathbb{D}_{6}$. Following the instructions of the ``only if part" of Theorem \ref{DF} the blocks of the STS(9) are:

$$\begin{matrix}
\{\infty_1,\infty_2,\infty_3\}, && \{1,x,x^2\}, && \{y,xy,x^2y\},\cr
\{\infty_1,1,y\}, && \{\infty_1,x,x^2y\}, && \{\infty_1,x^2,xy\},\cr
\{\infty_2,1,xy\},&& \{\infty_2,x,y\}, && \{\infty_2,x^2,x^2y\},\cr
\{\infty_3,1,x^2y\}, && \{\infty_3,x,xy\},&& \{\infty_3,x^2,y\}.
\end{matrix}$$

\medskip
In Section 4 we will make use of $\mathbb{D}_{6}$ again, for the construction of a 3-pyramidal STS$(24n+9)$ under
$\mathbb{D}_{6}\times \Z_{4n+1}$.

\section{The ``if part"}

In this section we determine the values of $v$ for which a 3-pyramidal STS$(v)$ cannot exist,
we namely prove the ``if part" of the main result Theorem \ref{main}.
For this, we need two lemmas about elementary group theory.

\begin{lem}\label{preTom}
If $G$ is a group of order $24n+18$,
then $G$ has a subgroup of index $2$.
\end{lem}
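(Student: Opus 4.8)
The plan is to exploit the fact that the order factors as $24n+18 = 2(12n+9)$, i.e.\ it has the form $2m$ with $m=12n+9$ odd, and to invoke the classical principle that any group whose order is twice an odd number has a (normal) subgroup of index $2$. First I would apply Cauchy's theorem to produce an element $t\in G$ of order $2$; such an element exists since $2\mid|G|$. The only structural feature of $G$ I need is that its $2$-part is exactly $2$, which is guaranteed because $24n+18\equiv 2\pmod 4$.

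Next I would pass to the left regular representation $\lambda:G\longrightarrow\mathrm{Sym}(G)$, where $\lambda(g)$ is the permutation $x\mapsto gx$ of the underlying set of $G$. The key is to read off the cycle structure of $\lambda(t)$: since $tx=x$ forces $t=1$, the permutation $\lambda(t)$ has no fixed point, and since $t^2=1$ it is an involution. Hence $\lambda(t)$ is a fixed-point-free involution of the $2m$-element set $G$, so it decomposes into exactly $m=12n+9$ disjoint transpositions.

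The crucial step is then a parity count. Because $m=12n+9$ is odd, $\lambda(t)$ is a product of an odd number of transpositions and is therefore an odd permutation. Composing $\lambda$ with the sign character $\mathrm{sgn}:\mathrm{Sym}(G)\longrightarrow\{\pm 1\}$ yields a homomorphism $\mathrm{sgn}\circ\lambda:G\longrightarrow\{\pm 1\}$ sending $t$ to $-1$, hence surjective; its kernel is a subgroup of index $2$, as required.

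I expect no serious obstacle, as the argument is routine once set up; the single point that genuinely uses the hypothesis is the parity computation, where it is essential that $|G|/2=12n+9$ be odd. This is precisely what $|G|\equiv 18\pmod{24}$ (already $|G|\equiv 2\pmod 4$) ensures, so I would take care to record why the $2$-part of $|G|$ equals $2$: were $4\mid|G|$, the fixed-point-free involution would split into an even number of transpositions and the whole approach would break down.
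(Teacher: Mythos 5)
Your proof is correct and rests on exactly the same classical fact the paper uses: a group whose order is twice an odd number has a subgroup of index $2$. The paper simply cites this result (Rose, \emph{A course on Group Theory}, Exercise 262), whereas you have written out its standard proof --- Cauchy's theorem to get an involution, the regular representation in which that involution is a fixed-point-free permutation splitting into an odd number $12n+9$ of transpositions, and the sign homomorphism whose kernel gives the index-$2$ subgroup --- so your argument is a self-contained version of precisely what the paper invokes.
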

\begin{proof}
It is well known that a group of order twice an odd number has a subgroup of
index 2. See, for example, \cite[Exercise 262]{R}.
\end{proof}

The next lemma makes use of the so-called ``Burnside normal $p$-complement theorem''
which is here recalled (see \cite[Theorem 6.17]{R}).
\begin{thm}\label{Burnside}
Let $P$ be a Sylow $p$-subgroup of a finite group $G$.
If $C_G(P) = N_G(P)$, then $P$ has a normal complement in $G$.
%
\end{thm}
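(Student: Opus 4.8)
The plan is to prove this classical result by means of the \emph{transfer homomorphism} (Verlagerung). First I would extract a structural consequence of the hypothesis: since $P\subseteq N_G(P)=C_G(P)$, every element of $P$ centralises $P$, so $P$ is abelian. This abelianness is exactly what makes the transfer computation tractable. Recall that the transfer attached to the subgroup $P$ is a homomorphism $\tau\colon G\longrightarrow P/[P,P]$; because $P$ is abelian it is just a homomorphism $\tau\colon G\longrightarrow P$, obtained by fixing a left transversal $t_1,\dots,t_n$ of $P$ in $G$ (so $n=[G:P]$) and sending $g$ to the product of the ``return elements'' $h_i\in P$ determined by $g\,t_i=t_{\sigma(i)}h_i$. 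The whole strategy is to show that $\tau$ is surjective with $\ker\tau\cap P=1$, for then $N:=\ker\tau$ is the sought normal complement.

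The core step is to evaluate $\tau$ on an element $u\in P$ using Burnside's fusion argument. Decomposing the cosets into orbits under left multiplication by $\langle u\rangle$, and choosing the $t_j$ to run cyclically through each orbit, one gets
\[
\tau(u)=\prod_j t_j^{-1}u^{\ell_j}t_j, \qquad \sum_j \ell_j=[G:P],
\]
where each $t_j^{-1}u^{\ell_j}t_j$, being a return element, lies in $P$. Now $t_j^{-1}u^{\ell_j}t_j$ and $u^{\ell_j}$ are two elements of the abelian Sylow subgroup $P$ that are conjugate in $G$; by Burnside's fusion theorem (elements of an abelian Sylow $p$-subgroup that are $G$-conjugate are already $N_G(P)$-conjugate) the conjugation can be realised inside $N_G(P)$. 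Since $N_G(P)=C_G(P)$ acts trivially by conjugation on $P$, this forces $t_j^{-1}u^{\ell_j}t_j=u^{\ell_j}$, whence $\tau(u)=u^{\sum_j\ell_j}=u^{[G:P]}$.

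To conclude, I would observe that $\gcd\!\big([G:P],|P|\big)=1$, because $P$ is a Sylow $p$-subgroup, so the power map $u\mapsto u^{[G:P]}$ is an automorphism of the finite abelian $p$-group $P$. Hence $\tau|_P$ is an automorphism of $P$; in particular $\tau$ is surjective and $P\cap\ker\tau=1$. Setting $N=\ker\tau$ gives $N\trianglelefteq G$, $|N|=[G:P]$ and $N\cap P=1$, so $G=N\rtimes P$ and $N$ is a normal complement of $P$. The main obstacle is the technical heart in the second paragraph: setting up the transfer rigorously (well-definedness, independence of the transversal, and the orbit-counting that yields $\sum_j\ell_j=[G:P]$) and the fusion-control step. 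The two hypotheses — abelianness of $P$ and triviality of conjugation by $N_G(P)$ — are precisely what collapse Burnside's fusion into an honest equality and let the transfer do the rest.
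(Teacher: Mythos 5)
This statement is Burnside's normal $p$-complement theorem, and the paper does not prove it at all: it is quoted verbatim as a known classical result, with a citation to Rose's \emph{A Course on Group Theory} (Theorem 6.17 there), and then used as a black box in the proof of Lemma~\ref{Tom}. So there is no proof in the paper to compare yours against; what you have done is reconstruct the classical argument the paper implicitly relies on. Your outline is the standard transfer proof and it is correct: the hypothesis $N_G(P)=C_G(P)$ together with $P\leq N_G(P)$ forces $P$ to be abelian, so the transfer is a homomorphism $\tau\colon G\to P$; the orbit decomposition gives $\tau(u)=\prod_j t_j^{-1}u^{\ell_j}t_j$ with $\sum_j \ell_j=[G:P]$; Burnside's fusion lemma (two elements of an abelian Sylow $p$-subgroup that are $G$-conjugate are $N_G(P)$-conjugate) combined with $N_G(P)=C_G(P)$ collapses each factor to $u^{\ell_j}$, yielding $\tau(u)=u^{[G:P]}$; and since $\gcd([G:P],|P|)=1$ this power map is an automorphism of $P$, whence $\ker\tau$ is a normal complement. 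The three ingredients you defer as technical --- well-definedness of the transfer, the evaluation formula, and the fusion lemma --- are all standard and correctly invoked; a fully self-contained write-up would need them spelled out (the fusion lemma in particular follows by observing that for $y=g^{-1}xg$ with $x,y\in P$, both $P$ and $g^{-1}Pg$ are Sylow $p$-subgroups of $C_G(y)$, hence conjugate by some $c\in C_G(y)$, and then $gc\in N_G(P)$ carries $x$ to $y$). In short: your proof is correct, and it is essentially the proof found in the very reference the paper points to.
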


\begin{lem}\label{Tom}
If $G$ is a group of order $16n+8$ containing exactly $3$ involutions, then $G$ has a subgroup
of index $2$ containing exactly one involution.
\end{lem}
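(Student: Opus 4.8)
The plan is to pin down the isomorphism type of a Sylow $2$-subgroup $P$ of $G$, which has order $8$ since $|G|=16n+8=8(2n+1)$, and then pull an index-$2$ subgroup of a suitable quotient back to $G$. Because every involution of $P$ is an involution of $G$, the group $P$ contains at most $3$ involutions; among the five groups of order $8$ this leaves only $\mathbb{Z}_8$, $Q_8$ (one involution each) and $\mathbb{Z}_4\times\mathbb{Z}_2$ (three involutions). First I would show that the first two cases cannot occur, and then carry out the construction in the remaining case.

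The main obstacle is ruling out the possibility that $P$ has a unique involution, i.e. $P\cong\mathbb{Z}_8$ or $Q_8$. The difficulty here is conceptual rather than computational: if $P$ has a single involution, then every involution of $G$ is the unique involution of some Sylow $2$-subgroup, and since all Sylow $2$-subgroups are conjugate, all three involutions of $G$ are conjugate. But then no normal subgroup---in particular no index-$2$ subgroup---can contain exactly one of them, so this case must be excluded outright rather than built around. I would exclude it as follows. No two involutions of $G$ can commute, since a commuting pair would generate a subgroup $\cong\mathbb{Z}_2\times\mathbb{Z}_2$ sitting inside a Sylow $2$-subgroup, contradicting that the latter has only one involution. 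Hence, letting $G$ act by conjugation on its three involutions, each involution fixes itself and must interchange the other two, i.e. it acts as a transposition. Composing the resulting homomorphism $G\to S_3$ with the sign map $S_3\to\{\pm1\}$ then yields a surjection onto a group of order $2$; its kernel $L$ has index $2$, hence even order $8n+4$, yet contains none of the three involutions and therefore no involution at all, contradicting Cauchy's theorem. This forces $P\cong\mathbb{Z}_4\times\mathbb{Z}_2$, and in particular all three involutions of $G$ lie in $P$.

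It then remains to produce the subgroup. Since $\mathrm{Aut}(\mathbb{Z}_4\times\mathbb{Z}_2)$ is a $2$-group, the quotient $N_G(P)/C_G(P)$ is at once a $2$-group and of odd order (because $C_G(P)\supseteq P$ already contains a full Sylow $2$-subgroup of $G$), hence trivial, so $N_G(P)=C_G(P)$. Burnside's normal $p$-complement theorem (Theorem \ref{Burnside} with $p=2$) then gives a normal $2$-complement $N$ of odd order $2n+1$ with $G=N\rtimes P$, so that the projection $\pi:G\to G/N$ restricts to an isomorphism $P\xrightarrow{\sim}G/N\cong\mathbb{Z}_4\times\mathbb{Z}_2$. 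The three involutions of $G$, all lying in $P$, map bijectively onto the three involutions of $G/N$. I would finally take the cyclic index-$2$ subgroup $\bar K\cong\mathbb{Z}_4$ of $G/N$, which contains exactly one of those three involutions, and set $K=\pi^{-1}(\bar K)$. Then $K$ has index $2$ in $G$ and, by the bijection, contains exactly one involution, as required.
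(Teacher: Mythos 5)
Your proof is correct, but it follows a genuinely different and substantially shorter route than the paper's. Both arguments open the same way, reducing a Sylow $2$-subgroup $P$ of order $8$ to one of the types $\Z_8$, $Q_8$, $\Z_4\times\Z_2$ and then excluding the first two; your exclusion, however, is more elementary: where the paper shows that $H=\langle j_1,j_2,j_3\rangle$ would have to be dihedral of order $6$ and derives a contradiction by applying the $N/C$-theorem to its subgroup $T$ of order $3$ (forcing $T$ to be central in $H$), you observe that no two involutions can commute, so conjugation sends each involution to a transposition of the other two, and the kernel of the sign of this action is an index-$2$ subgroup of even order containing no involution, against Cauchy's theorem. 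The larger divergence is in the construction: the paper splits into two cases according to whether the three involutions are central or form a single conjugacy class of size $3$, applies Lemma \ref{preTom} and Theorem \ref{Burnside} to $H\simeq\Z_2\times\Z_2$ inside intermediate subgroups, and needs uniqueness arguments to promote the resulting odd-order subgroup to one normal in all of $G$, before forming $O\rtimes Q$ with $Q\simeq\Z_4\leq P$. You collapse all of this into a single application of Theorem \ref{Burnside} to $P$ itself: since $P$ is abelian and $\mathrm{Aut}(\Z_4\times\Z_2)$ is a $2$-group of order $8$, the index $[N_G(P):C_G(P)]$ is simultaneously odd and a power of $2$, so $N_G(P)=C_G(P)$ and $P$ has a normal complement $N$ of order $2n+1$; the preimage in $G$ of a cyclic subgroup of order $4$ of $G/N\simeq P$ is then the required subgroup. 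In effect you prove the stronger statement that $G$ is $2$-nilpotent, i.e.\ $G=N\rtimes P$, from which the lemma is immediate, whereas the paper reaches essentially the same normal subgroup of order $2n+1$ only after a case analysis that your argument makes unnecessary (curiously, the paper does use the fact that $\mathrm{Aut}(\Z_4\times\Z_2)$ has order $8$, but only locally, inside its second case).
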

\begin{proof}
Let $j_1,j_2$, and $j_3$ be the three involutions of $G$ and let
$H=\langle j_1, j_2, j_3 \rangle$
be the group they generate. We point out that $H$ is a normal subgroup of $G$ since
it is generated by all the elements of order $2$.
Now, let $P$ be a $2-$Sylow subgroup of $G$. As $P$ has order $8$ and $G$ contains exactly three involutions,
by taking into account the classification of the groups of order $8$,
we have three possibilities:
namely $P$ is either isomorphic to the group $\Bbb Z_4 \times \Bbb Z_2$ or
$P$ contains exactly one involution,
i.e., it is $P\simeq\Bbb Z_8$ or $P\simeq Q_8$ (the quaternion group of order $8$).

First of all we prove that it is necessarily $P\simeq\Bbb Z_4\times \Bbb Z_2$.
By contradiction, assume that $P$ contains exactly one involution.
It is known that, in general, any two involutions of $G$ generate a dihedral group; also,
  a dihedral group of order $2h$ contains at least $h$ involutions. Therefore, either
 $\langle j_1, j_2\rangle \simeq \mathbb{D}_4 \simeq \Z_2\times\Z_2$ or $\langle j_1, j_2\rangle \simeq \mathbb{D}_6$.
In both cases, $\langle j_1, j_2\rangle$ has three involutions, hence
$j_3 \in \langle j_1, j_2\rangle$ and $H=\langle j_1, j_2\rangle$.
Since $P$ contains just one involution, it is necessarily $H=\mathbb{D}_6$,
otherwise $P$  should contain a subgroup isomorphic to $\Bbb Z_2\times \Bbb Z_2$.

Let $T$ be the subgroup of $H$ of order $3$.
As usual, we denote by $N_G(T)$ and $C_G(T)$ the normalizer and the centralizer
of $T$ in $G$, respectively.
By the $N/C$-theorem, the quotient group
$N_G(T)/C_G(T)$ is isomorphic to a subgroup of $Aut(T)\simeq \Z_2$.
Since $T$ is the unique subgroup of $H$ of order $3$,
$T$ is characteristic in $H$ and then it is normal in $G$, that is $N_G(T)=G$.
Therefore, either $C_G(T) = G$ or $C_G(T)$ is a subgroup of $G$ of index $2$.
In both cases,  $C_G(T)$ is a normal subgroup of $G$ of even order.
The three involutions of $G$ are pairwise conjugate, hence,
they are contained in any normal subgroup of $G$ of even order and then in $C_G(T)$.
It then follows that $T$ is central in $H$
contradicting the fact that $H$ is dihedral.

We conclude that
$P \simeq \Z_4 \times \Z_2$; in particular, $P$ is abelian and contains a subgroup $Q\simeq \Z_4$; also, $P\supseteq H \simeq \Z_2 \times\Z_2$.
  To prove the assertion, we need to show that $G$ has a normal subgroup $O$ of order $2n+1$.
In fact, the semidirect product of $O$ and $Q$ will be
a subgroup of $G$ of index $2$ containing exactly one involution.

  Recall that $|G:C_G(j_i)|$ is the size of $cl(j_i)$, the conjugacy class of $j_i$ in $G$,
  which cannot exceed the total number of involutions in $G$ hence, $|G:C_G(j_i)|\leq 3$ for any $i=1,2,3$.
  On the other hand, $P$ is abelian hence, $P\leq C_G(j_i)$ for any $i=1,2,3$. It then follows that
  either $|G:C_G(j_i)| = 1$ for any $i=1,2,3$ or $|G:C_G(j_i)| = 3$ for any $i=1,2,3$.

  We first deal with the former case in which all involutions of $G$ are central.
  Since $G/H$ has order $2d$, $d$ odd,  then it has a subgroup of index $2$ (see for example  \cite[Exercise 262]{R}).
  In other words,
  there exists a normal subgroup $N$ of $G$ of index $2$ which contains $H$.
  Since $H$ is a central $2$-Sylow subgroup of $N$,
  by Theorem \ref{Burnside},
  $H$ has a normal complement $O$ in $N$; in particular, $O$ has order $2n+1$.
Now, suppose the existence of
   another subgroup $K$ of $N$ of order $2n+1$. Then $K\ \cap\ O$ is normal in $K$
   and the quotient $Q_1=K/(K\ \cap\ O)$ should be isomorphic
   to $Q_2=(K+O)/O$. However, $Q_1$ has odd order,
   while $Q_2$ has even order.
   Therefore, $O$ is the only subgroup of
  $N$ of order $2n+1$, hence it is normal in $G$.

  We finally consider the case $|G:C_G(j_i)| = 3$ for any $i=1,2,3$.
  By the N/C-theorem,
  $G/C_G(H)$ is isomorphic to a subgroup of $Aut(H)\simeq \mathbb{D}_6$.
  Since $P \leq C_G(H) \leq C_G(j_i)$, we have that $C_G(H) = C_G(j_i)$ for any $i=1,2,3$.
  Now note that $C_G(H)$ satisfies the same assumption as $G$ and all involutions are central
  in $C_G(H)$. Therefore, we can proceed as in the previous case to show that there is
   a normal subgroup $\Omega$ of $C_G(H)$ of order
   $\frac{2n+1}{3}$.
  As before, it is not difficult to check that $\Omega$ is the only subgroup of $C_G(H)$ of order $\frac{2n+1}{3}$, therefore it is normal in $G$.

  Set now $\ol{G} = G/\Omega$ and $\ol{P} = C_G(H)/\Omega$. Note that  $\ol{P}\simeq P$,
  $\ol{P}$ is normal in $\ol{G}$ (i.e., $N_{\ol{G}}(\ol{P})=\ol{G}$)
  and $\ol{G}/\ol{P}$ has order $3$. Also, since $\ol{P}$ is abelian, then
  $\ol{P}\leq C_{\ol{G}}(\ol{P})$ hence, $|\ol{G}:C_{\ol{G}}(\ol{P})|=1$ or $3$.
  Considering that, by the N/C-theorem, $\ol{G}/C_{\ol{G}}(\ol{P})$ is isomorphic to a subgroup of
  $Aut(\ol{P})$, and that $Aut(\ol{P})$ has order $8$, we then have that
  $|\ol{G}:C_{\ol{G}}(\ol{P})|=1$. This means that $\ol{P}$ is central in $\ol{G}$. Hence,
  $\ol{G}$ is the direct product of $\ol{P}$ by a normal subgroup $\ol{O}$ of order $3$ which is the
  quotient by $\Omega$ of a normal subgroup $O$ of $G$ of order $2n+1$.
\end{proof}

\begin{thm}\label{non-existence}
There is no $3$-pyramidal STS(v) in each of the following cases:
\begin{itemize}
\item[$(i)$] $v\equiv 1$ (mod $24$);
\item[$(ii)$] $v\equiv 13$ (mod $24$);
\item[$(iii)$] $v\equiv 21$ (mod $24$);
\item[$(iv)$] $v\equiv 27$ (mod $48$);
\item[$(v)$] $v\equiv 43$ (mod $48$).
\end{itemize}
\end{thm}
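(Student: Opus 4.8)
The plan is to start from Theorem \ref{DF}: a $3$-pyramidal STS$(v)$ exists if and only if some group $G$ of order $v-3$ possessing exactly three involutions $j_1,j_2,j_3$ carries a $(G,\{2^3,3^e\},3,1)$-DF $\mathcal{F}$. So I assume such a $G$ and $\mathcal{F}$ exist and derive a contradiction in each of the five classes. Throughout I would lean on the observation already isolated in the proof of Lemma \ref{Tom}: since any two involutions generate a dihedral group and $G$ has only three involutions, the subgroup $H=\langle j_1,j_2,j_3\rangle$ is forced to be isomorphic to $\Z_2\times\Z_2$ or to $\mathbb{D}_6$. In particular $|H|\in\{4,6\}$ divides $|G|=v-3$, so necessarily $4\mid(v-3)$ or $6\mid(v-3)$.

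Cases $(i)$ and $(ii)$ I would dispatch by this divisibility alone, with no reference to the difference family. If $v\equiv 1$ or $13 \pmod{24}$ then $v-3\equiv 22$ or $10 \pmod{24}$; in both instances $v-3\equiv 2\pmod 4$ and $v-3\equiv 4\pmod 6$, so neither $4$ nor $6$ divides $v-3$. Hence no group of order $v-3$ can have exactly three involutions, and a fortiori no DF of the required type can exist.

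The three remaining classes need a counting argument relative to an index-$2$ subgroup. Suppose $N\le G$ has index $2$ and let $\chi\colon G\to\Z_2$ be the associated epimorphism. Since $\chi$ kills every element of odd order, the identity and all order-$3$ elements lie in $N$; write $t$ for the number of involutions lying outside $N$. For a base block $B=\{a,b,c\}$ the six differences it contributes map under $\chi$ to the three values $\chi(a)+\chi(b),\ \chi(a)+\chi(c),\ \chi(b)+\chi(c)$, each occurring twice; hence the number of differences of $B$ landing outside $N$ is $0$ when $B$ is monochromatic under $\chi$ and $4$ otherwise. Summing over $\mathcal{F}$, the total number of differences outside $N$ is a multiple of $4$. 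By the DF property this total equals the number of non-$\Sigma$ group elements outside $N$, which is $(v-3)/2-t$. Therefore one obtains the necessary congruence $(v-3)/2\equiv t\pmod 4$.

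It then remains to exhibit, in each of $(iii)$–$(v)$, an index-$2$ subgroup making this congruence fail. For $v\equiv 21\pmod{24}$ we have $|G|=24n+18$, twice an odd number, so Lemma \ref{preTom} yields an index-$2$ subgroup $N$; as $|N|$ is odd it contains no involution, giving $t=3$, while $(v-3)/2=12n+9\equiv 1\pmod 4$, a contradiction. For $v\equiv 27$ or $43\pmod{48}$ we have $|G|\equiv 8\pmod{16}$, so Lemma \ref{Tom} supplies an index-$2$ subgroup containing exactly one involution, giving $t=2$, whereas $(v-3)/2\equiv 0\pmod 4$, again a contradiction. I expect the main obstacle to be conceptual rather than computational: the counting argument of the third paragraph is silent precisely for $v\equiv 1\pmod{24}$ (and for $v\equiv 13$), where it returns the satisfiable congruence $(v-3)/2\equiv 3\equiv t\pmod 4$ with $t=3$, so no contradiction surfaces there. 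The crux is recognizing that these two classes must instead be killed structurally, through the dichotomy $4\mid(v-3)$ or $6\mid(v-3)$ forced by pinning down $H$ up to isomorphism; matching the remaining classes to the index-$2$ subgroups furnished by Lemmas \ref{preTom} and \ref{Tom} is then the routine technical step.
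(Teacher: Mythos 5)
Your proof is correct. For cases $(iii)$--$(v)$ it is essentially the paper's own argument: the same index-$2$ subgroups supplied by Lemmas \ref{preTom} and \ref{Tom}, and the same observation that a block contributes $0$ or $4$ differences outside an index-$2$ subgroup, yielding a quantity $\equiv 2 \pmod 4$ that should be divisible by $4$ (the paper counts $12n+6$, $24n+10$, $24n+18$ directly; your congruence $(v-3)/2 \equiv t \pmod 4$ is the same computation packaged uniformly). Where you genuinely diverge is in cases $(i)$--$(ii)$. The paper observes that when $|G|\equiv 2 \pmod 4$ the three subgroups $\{0,j_i\}$ are precisely the $2$-Sylow subgroups, so $n_2(G)=3$, and Sylow's third theorem forces $3 \mid |G|$, which is false since $|G|\equiv 1 \pmod 3$. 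You instead pin down $H=\langle j_1,j_2,j_3\rangle$ as $\Z_2\times\Z_2$ or $\mathbb{D}_6$ via the dihedral dichotomy (lifted from the proof of Lemma \ref{Tom}) and contradict Lagrange: neither $4$ nor $6$ divides $v-3\equiv 10 \pmod{12}$. Both routes are elementary and valid; yours has the merit of reusing a fact the paper needs anyway and avoiding Sylow theory, while the paper's avoids any structural analysis of $H$ in these cases.

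One inaccuracy in your closing commentary, which does not affect the proof itself: the mod-$4$ counting is \emph{not} silent for $v\equiv 13\pmod{24}$. There $|G|=24n+10$ is twice an odd number, so Lemma \ref{preTom} provides an index-$2$ subgroup of odd order, giving $t=3$, while $(v-3)/2=12n+5\equiv 1 \pmod 4$; the congruence $(v-3)/2\equiv t\pmod 4$ therefore fails, and case $(ii)$ could also have been settled by counting. The counting argument is genuinely powerless only for $v\equiv 1\pmod{24}$, where $(v-3)/2=12n-1\equiv 3\equiv t \pmod 4$, so the structural argument is indispensable only there.
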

\begin{proof}
Cases (i)-(ii). Suppose that there exists a STS$(v)$ with $v=24n+1$ or $v=24n+13$
which is $3$-pyramidal under a group $G$.
Therefore $G$ is a group of order $24n-2$ or $24n+10$ with exactly three subgroups of order 2.
Now note that these subgroups are precisely the $2$-Sylow subgroups of $G$ since the order of $G$
is divisible by 2 but not by 4. Hence  $n_2(G)$, the number of $2$-Sylow subgroups of $G$, is $3$.
By the third Sylow theorem $n_2(G)$ should be also a divisor of ${|G|\over2}$. We conclude that
3 should be a divisor of $|G|$ which is clearly false.

Case (iii). Now assume that there exists a 3-pyramidal STS$(24n+21)$.
Then there exists a $(G,\{2^3,3^e\},3,1)$-DF $\cal F$ for a suitable group $G$ of order $24n+18$
with exactly 3 involutions and a suitable $e\geq0$.

By Lemma \ref{preTom} there is a subgroup $S$ of $G$ of index $2$. Note that an element of $G$
has odd or even order according to whether it is in $S$ or not, respectively.
Thus, in particular, the three involutions of $G$ are all contained in $G\setminus S$
while every subgroup of $G$ of order 3 is contained in $S$.
Thus the subset of $G\setminus S$ which is covered by $\Delta{\cal F}$ has size $|G\setminus S| -3=12n+6$.

If $B$ is any block of ${\cal F}$ having some differences in $G\setminus S$, then
it necessarily has two points lying in distinct cosets of $S$ in $G$. Thus,
up to translations, we have $B=\{0,s,t\}$ with $s\in S$ and $t\in G\setminus S$.
It follows that $\Delta B \ \cap \ (G\setminus S)=\{\pm t,\pm(s-t)\}$, hence $\Delta B$ has exactly four
elements in $G\setminus S$.

>From the above two paragraphs we conclude that $12n+6$ should be divisible by 4 which is clearly absurd.

Cases (iv)-(v). Assume that there exists a 3-pyramidal STS$(v)$ with $v\equiv27$ or $43$ (mod 48).
Thus there exists a $(G,\{2^3,3^e\},3,1)$-DF $\cal F$ for a suitable group $G$ of order $48n+24$ or $48n+40$
with exactly three involutions and where $e\geq0$ in case (iv)
or $e=0$ in case (v).
Note that in both cases we have $|G|\equiv8$ (mod 16) so that, by Lemma \ref{Tom},
$G$ has a subgroup $S$ of index $2$ containing exactly one
involution. Thus the subset of $G\setminus S$ which is covered by $\Delta{\cal F}$ has size $|G\setminus S|-2=24n+10$ or
$24n+18$.
Then, reasoning as in case (iii), $24n+10$ or $24n+18$ should be divisible by 4 which is absurd.
\end{proof}

\section{The ``only if part"}

For proving the ``only if part" of our main theorem, we have to give a direct construction for a 3-pyramidal STS$(v)$
whenever $v$ is admissible and not forbidden by Theorem \ref{non-existence}, hence for
any $v\equiv7, 9, 15$ (mod 24) and for any $v\equiv 3, 19$ (mod 48).
The most laboured constructions are those for the last two cases where we will use
{\it extended Skolem sequences} and {\it extended Langford sequences}.

\vskip0.3truecm
\noindent
Given a pair $(k,n)$ of positive integers with $1\le k\le 2n+1$, a  $k$-{\it extended Skolem sequence} of order $n$
can be viewed as a sequence  $(s_1,\dots, s_n)$ of $n$ integers such that

$$\bigcup_{i=1}^n \{s_i, s_i+i \} = \{1,2,\dots, 2n+1\}\setminus\{k\}.$$

\noindent
The existence question for extended Skolem sequences was completely settled by C. Baker in \cite{Ba}.

\begin{thm}\label{skolem} {\rm \cite{Ba}}
There exists a $k$-extended Skolem sequence of order $n$
if and only if either $k$ is odd and $n\equiv 0,1$ $($mod $4)$ or $k$ is even
and $n\equiv 2,3$ $($mod $4)$.
\end{thm}

A $k$-extended Langford sequence of order $n$
and defect $d$ can be viewed as a sequence of $n$ integers
$(\ell_1, ..., \ell_n)$ such that

$$\bigcup_{i=1}^n \{\ell_i, \ell_i+i+d-1 \} = \{1,2,\dots, 2n+1\}\setminus\{k\}.$$

\noindent
We need the following partial result about extended Langford sequences by V. Linek and S. Mor.

\begin{thm}\label{langford} {\rm\cite{LinMor}} There exists a $k$-extended Langford sequence of order $n$ and defect
$d$ for any triple $(k,n,d)$ with $n\ge 2d$, $n\equiv 2$ $($mod $4)$ and $k$ even.
\end{thm}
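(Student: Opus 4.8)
The plan is to prove the existence of a $k$-extended Langford sequence directly, by exhibiting an explicit partition of $\{1,2,\dots,2n+1\}\setminus\{k\}$ into $n$ pairs whose within-pair differences are exactly $d,d+1,\dots,d+n-1$, each occurring once. Before constructing anything I would record the parity bookkeeping that both guides and motivates the hypotheses. The $i$-th target difference is $i+d-1$; a pair with odd difference consists of one odd and one even integer, while a pair with even difference consists of two integers of the same parity. Since $d,d+1,\dots,d+n-1$ are $n$ consecutive integers and $n$ is even, exactly $n/2$ of them are odd, so there are exactly $n/2$ odd-difference pairs. On the other hand, removing an even $k$ leaves $n+1$ odd integers in the ground set; counting odd integers against the pairs forces the number of odd-difference pairs to be congruent to $n+1$ modulo $2$. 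Comparing this with the value $n/2$ shows consistency precisely when $n\equiv 2\pmod 4$. Thus the hypotheses ``$n\equiv 2\pmod 4$ and $k$ even'' are exactly what make the parity count close, confirming we are in a feasible regime; the real content of the theorem is that they are also \emph{sufficient}.

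For the construction itself I would give an explicit pairing described by piecewise-linear formulas in the index $i$, rather than an ad hoc list. The standard device is to split the integers into a bounded number of arithmetic runs and to match low runs against high runs so that, as $i$ increases, the realized difference $i+d-1$ grows by exactly one at each step and sweeps monotonically through $d,\dots,d+n-1$. The omitted value $k$ is absorbed by choosing where the runs break, which means I would organize the argument as a case analysis on the rough location and residue of $k$ in $\{1,\dots,2n+1\}$ (e.g.\ $k$ small, $k$ near the middle, $k$ large relative to $d$), giving uniform formulas $\ell_i$ in each case. The hypothesis $n\ge 2d$ supplies the slack that keeps these runs long enough and non-overlapping: it guarantees that the block handling the smallest differences $d,\dots,2d-1$ has room to sit below the block handling the larger differences, so the two families of pairs do not collide. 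One could also streamline the parameter range by a recursion that increases $n$ in steps of $4$ (preserving $n\equiv 2\pmod 4$) through a fixed block attached at the top of the range.

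The main obstacle I anticipate is not any individual pairing but the \emph{simultaneous} double bookkeeping that each case must pass: for every admissible position of $k$ one must verify both that the proposed pairs use each element of $\{1,\dots,2n+1\}\setminus\{k\}$ exactly once, with none repeated or omitted, and that they realize each difference in $\{d,\dots,d+n-1\}$ exactly once. These two verifications interact, because shifting a run to make room for the hole at $k$ changes simultaneously which elements are covered and which differences appear. I would therefore design the case split on $k$ so that in each case the formulas are uniform enough to confirm both conditions by a short interval-counting argument rather than element by element, and I would isolate the handful of boundary triples $(k,n,d)$ — where the generic formulas degenerate, typically near $n=2d$ or at the extreme values of $k$ — to be dispatched separately by direct exhibition. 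Compressing the many configurations into a few uniform families, while keeping the list of genuine exceptions finite and explicitly checkable, is where essentially all of the effort would lie.
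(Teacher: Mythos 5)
This statement is not proved in the paper at all: it is a result quoted from Linek and Mor \cite{LinMor}, whose article is devoted precisely to constructing these sequences. So the only possible comparison is between your proposal and what a genuine proof would have to contain.

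Your parity bookkeeping is correct and worth stating: with $n$ even, exactly $n/2$ of the differences $d,\dots,d+n-1$ are odd, while deleting an even $k$ from $\{1,\dots,2n+1\}$ leaves $n+1$ odd elements, and counting odd elements modulo $2$ forces the number of odd-difference pairs to be odd, whence $n/2$ is odd and $n\equiv 2\pmod 4$. But this only shows the hypotheses are mutually consistent (indeed, it is essentially the necessity direction). The gap is everything after that: you never exhibit a single pairing, never write down the piecewise-linear formulas, never specify the case analysis on the position of $k$, and never carry out either of the two verifications (coverage of $\{1,\dots,2n+1\}\setminus\{k\}$ and coverage of the difference set) that you yourself identify as the crux. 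Sentences of the form ``I would give uniform formulas $\ell_i$ in each case'' and ``I would isolate the handful of boundary triples'' are statements of intent, not mathematics, and in this theorem that deferred work is not routine: the published proof occupies an entire paper, with delicate explicit constructions and separate treatment of the degenerate configurations near $n=2d$ and at extreme values of $k$. As it stands, your proposal establishes a parity constraint and a plausible strategy, but the sufficiency assertion --- which is the entire content of the theorem --- remains unproved.
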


For general background concerning Skolem sequences, their variants, and their applications, we refer to
 \cite{S}. We also refer to the recent survey \cite{FM} which, however, fails to mention some of our work; for instance,
extended Skolem sequences have been crucial in our mentioned work on 1-rotational STSs \cite{BBRT,B} and
also in two papers \cite{BGCOM,WB} dealing, more generally, with 1-rotational $k$-cycle systems.

In the next lemma we combine Skolem sequences and Langford sequences to get the last ingredient that
we need for proving the ``only if part" of our main result. This lemma will be used, specifically, in the construction of a
3-pyramidal STS$(v)$ with $v\equiv3$ (mod 96).

\begin{lem}\label{skolem+langford}
There exists a $(\Z_{12n},\{3,4\},3,1)$-DF for any even $n\geq2$.
\end{lem}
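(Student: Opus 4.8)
The plan is to translate the statement, via the difference-family machinery of Theorem \ref{DF}, into a pure difference problem and then to solve it by splicing together one extended Skolem sequence and one extended Langford sequence. In $\Z_{12n}$ the subgroup of order $3$ is $\langle 4n\rangle=\{0,4n,8n\}$ and the subgroup of order $4$ is $\langle 3n\rangle=\{0,3n,6n,9n\}$, so the partial spread of type $\{3,4\}$ covers exactly $\{0,3n,4n,6n,8n,9n\}$. Hence a $(\Z_{12n},\{3,4\},3,1)$-DF is the same as a set ${\cal F}$ of $2n-1$ triples whose differences, read as positive representatives in $\{1,\dots,6n-1\}$, hit every element of $D:=\{1,\dots,6n-1\}\setminus\{3n,4n\}$ exactly once and never produce $6n$. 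Writing each block as $\{0,a,a+b\}$, its three positive differences are $a,b,a+b$ whenever $a+b\le 6n-1$; thus it suffices to partition $D$ into $2n-1$ such sum-triples.

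I would arrange every sum-triple to consist of one \emph{short} difference in $\{1,\dots,2n-1\}$ together with two \emph{long} differences in $\{2n,\dots,6n-1\}\setminus\{3n,4n\}$, the short one being the gap between the two long ones. Given pairs $(p_i,q_i)$ with $q_i-p_i=\delta_i$, the blocks $\{0,\delta_i,\mu+q_i\}$ have difference set $\{\delta_i,\mu+p_i,\mu+q_i\}$, so a family of pairs with difference-set $\{\delta_i\}$ and endpoint-set $P$ yields the short differences $\{\delta_i\}$ and the long differences $\{\mu+p:p\in P\}$. I would produce the lower block of shorts $\{1,\dots,N_1\}$ from an extended Skolem sequence of order $N_1$ and the upper block $\{N_1+1,\dots,2n-1\}$ from an extended Langford sequence of order $N_2=2n-1-N_1$ and defect $d=N_1+1$; the shifts $\mu_1=2n-1$ and $\mu_2=2n+2N_1$ make the two intervals of longs abut and fill $\{2n,\dots,6n-1\}$.

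The two forbidden values $3n,4n$ are then forced to be precisely the holes induced by the two extended sequences: taking the Skolem hole $k_1=n+1$ places its gap at $2n-1+k_1=3n$, and taking the Langford hole $k_2=2(n-N_1)$ places its gap at $2n+2N_1+k_2=4n$. It remains to choose $N_1$ so that both sequences exist. By Theorem \ref{skolem}, since $n$ is even $k_1=n+1$ is odd, so we need $N_1\equiv0,1\pmod4$; by Theorem \ref{langford} we need $k_2$ even (automatic), $N_2\equiv2\pmod4$, and $N_2\ge 2d=2N_1+2$. Because $n$ is even, $N_2\equiv2\pmod4$ is equivalent to $N_1\equiv1\pmod4$, which also meets the Skolem requirement; and the hole-range condition $k_1\le 2N_1+1$ together with the Langford bound amounts to $n/2\le N_1\le(2n-3)/3$.

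The main obstacle is exactly this parameter bookkeeping: I must exhibit an integer $N_1\equiv1\pmod4$ in the window $[\,\lceil n/2\rceil,\ \lfloor(2n-3)/3\rfloor\,]$. This window is nonempty and long enough to contain such an $N_1$ for all sufficiently large even $n$, so the generic construction succeeds there; the finitely many small even $n$ (and any residual $n$ for which the window misses the class, where one may instead interchange the roles of the two sequences or send the holes the other way) are to be handled by writing the blocks down directly — as is already possible, for instance, for $n=2$ with $\{0,1,5\},\{0,2,11\},\{0,3,10\}$. Finally I would verify the routine points: in each block the three differences are distinct and lie in disjoint ranges, no block yields the self-paired value $6n$, and the short and long differences together form precisely $D$, so that ${\cal F}$ is the desired difference family.
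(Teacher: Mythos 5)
Your generic construction is not only correct but is, in substance, identical to the paper's own proof: the paper also places a Skolem-type sequence on the short differences $\{1,\dots,N_1\}$ with shift $2n-1$ and hole $k_1=n+1$ landing on $3n$, and an extended Langford sequence of defect $N_1+1$ with shift $2n+2N_1$ and hole $2(n-N_1)$ landing on $4n$; the paper's parameter $\epsilon$ (with $n=2m$ and $\epsilon=r+(-1)^r$, $r=m \bmod 4$) is precisely the device that chooses $N_1=m+\epsilon$ as the least integer $\equiv 1 \pmod 4$ with $N_1\ge n/2$. Your congruence and window analysis is sound: the construction goes through exactly when there is an integer $N_1\equiv 1\pmod 4$ in $\left[\lceil n/2\rceil,\ \lfloor (2n-3)/3\rfloor\right]$, and a short check shows this holds for every even $n\ge 2$ \emph{except} $n\in\{2,4,6,8,12,14,20\}$.

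The gap is in those exceptional cases. You exhibit blocks only for $n=2$; the six values $n\in\{4,6,8,12,14,20\}$ are left with the bare assertion that they can be ``handled by writing the blocks down directly,'' which is not a proof, and the lemma claims existence for \emph{all} even $n\ge 2$. Worse, the escape routes you mention in passing provably fail, so they cannot substitute for the missing constructions: whichever sequence occupies the lower block of long differences has its hole at offset $3n-(2n-1)=n+1$ or $4n-(2n-1)=2n+1$, both odd since $n$ is even, so by the evenness hypothesis of Theorem \ref{langford} that sequence must be the Skolem one (its hole must be odd by Theorem \ref{skolem}); and the Skolem hole cannot be sent to $4n$, since $k_1=2n+1\le 2N_1+1$ forces $N_1\ge n$, incompatible with the defect bound $N_1\le (2n-3)/3$. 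Thus the configuration you chose is forced, its arithmetic genuinely excludes those six values, and each of them needs an ad hoc difference family — which is exactly what the paper supplies in its table of explicit blocks for $n\in\{2,4,6,8,12,14,20\}$, and what your proposal omits.
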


\begin{proof}\quad We have to construct a set ${\cal F}_n$
of $2n-1$ triples with elements in $\Z_{12n}$
whose differences cover $\Bbb Z_{12n}\setminus \{0,3n,4n,6n,8n,9n\}$
exactly once. For the small cases $n\in\{2,4,6,8,12,14,20\}$ one can check that we can
take ${\cal F}_n=\{B_1,\dots,B_{2n-1}\}$ with $B_i=\{0,i,b_i\}$ and the $b_i$s as in the
following table:

\medskip
\scriptsize\noindent
\noindent\begin{tabular}{|l|c|r|c|r|c|r|c|r|c|r|c|r|}
\hline {$n$} & $(b_1,b_2,\dots,b_{2n-1})$     \\
\hline $2$ & $(5,9,13)$ \\
\hline $4$ & $(40,37,34,30,38,29,28)$ \\
\hline $6$ & $(13, 16, 20, 19, 26, 28, 30, 39, 34, 37, 40)$ \\
\hline $8$ & $(17, 20, 22, 25, 28, 33, 36, 34, 39, 47, 46, 52, 54, 45, 53)$ \\
\hline $12$ & $(66, 63, 37, 64, 38, 62, 39, 58, 40, 59, 41, 67, 42, 68, 43, 69, 44, 70, 45, 71, 46, 57, 47)$\\
\hline $14$ & $(76, 74, 43, 70, 44, 77, 45, 73, 46, 68, 47, 69, 48, 78,49, 79, 50, 80, 51, 81, 52, 82, 53, 83, 54, 67, 55)$\\
\hline $20$ & $(110, 103, 61, 108, 62, 102, 63, 100, 64, 105, 65, 106, 66, 107, 67, 98, 68,99, 69,111,70, 112, 71,$\\
& \hfill$113,72, 114, 73, 115, 74, 116, 75, 117, 76, 118, 77, 119, \
78, 97, 79)$ \\
\hline
\end{tabular}

\medskip\normalsize
For all the other values of $n$, set $n=2m$ and $\epsilon=r+(-1)^r$ where $r$
is the remainder of the Euclidean division of $m$ by 4.

By applying Theorem \ref{skolem} one can see that there exists a $(2m+1)$-extended
Skolem sequence $(s_1,\dots,s_{m+\epsilon})$ of order $m+\epsilon$. Also, by applying Theorem \ref{langford} one can see that
there exists a $(2m-2\epsilon)$-extended Langford sequence $(\ell_1,\dots,\ell_{3m-\epsilon-1})$ of order $3m-\epsilon-1$ and defect $m+\epsilon+1$.
Then one can see that the desired difference family is the one whose blocks are
the following:

\smallskip
$\{0, \ -i, \ s_i+4m-1\}$\hfill with $1 \le i \le m+\epsilon$;

\smallskip
$\{0, \ -(m+\epsilon+i), \ \ell_i+6m+2\epsilon\}$\hfill with $1 \le i \le 3m-\epsilon-1$.

\smallskip
\end{proof}

\begin{thm}\label{existence}
There exists a $3$-pyramidal STS$(v)$ for any admissible value of $v$
not forbidden by Theorem {\rm\ref{non-existence}}.
\end{thm}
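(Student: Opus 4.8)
My plan is to prove Theorem~\ref{existence} constructively, turning it entirely into a problem about difference families through Theorem~\ref{DF}. By that theorem, a $3$-pyramidal STS$(v)$ exists as soon as one produces a group $G$ of order $v-3$ having exactly three involutions, together with a $(G,\{2^3,3^e\},3,1)$-DF for some $e\geq 0$. It therefore suffices to exhibit, for each residue class left open by Theorem~\ref{non-existence} --- namely $v\equiv 7,9,15\pmod{24}$ and $v\equiv 3,19\pmod{48}$ --- a suitable group and a suitable difference family. I would organise the argument into these five families, and inside each family first fix the ambient group and then build the family.

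For the groups I would always use a direct product $\Gamma\times\Z_m$ in which the three involutions are confined to the small factor $\Gamma$. Concretely I would take $G=\mathbb{D}_6\times\Z_{4n+1}$ when $v=24n+9$ (the three involutions being the reflections of $\mathbb{D}_6$, exactly as in the STS$(9)$ of Section~2); $G=\Z_2^2\times\Z_{6n+1}$ when $v=24n+7$ and $G=\Z_2^2\times\Z_{6n+3}$ when $v=24n+15$ (the three involutions being the non-identity elements of the Klein factor); $G=\Z_2\times\Z_{24N}$ when $v=48N+3$; and a group of order $16(3N+1)$ with three involutions, such as $\Z_2\times\Z_{8(3N+1)}$, when $v=48N+19$. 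In each case one checks at once that $G$ has exactly three involutions, and a counting congruence fixes the admissible $e$: since $6|\mathcal{F}|=|G|-4-2e$, one needs $e\equiv 0\pmod 3$ when $v\equiv 1\pmod 6$ (so $e=0$, as $3\nmid|G|$ for the families $v\equiv 7\pmod{24}$ and $v\equiv 19\pmod{48}$) and $e\equiv 1\pmod 3$ when $v\equiv 3\pmod 6$ (so $e=1$, realised by a single order-$3$ subgroup of $G$).

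For the difference families themselves I would use a layered, or lifting, construction keyed to the chosen factorisation. The blocks lying entirely in the copy $\{0\}\times\Z_m$ reproduce a cyclic (relative) difference family on the cyclic factor, whose existence I would take from Peltesohn's theorem in the three mod-$24$ families and, in the two laboured mod-$48$ families, from extended Skolem and Langford sequences. This is precisely what Lemma~\ref{skolem+langford} delivers for $v\equiv 3\pmod{96}$: a $(\Z_{12n},\{3,4\},3,1)$-DF, which I then lift to $\Z_2\times\Z_{12n}$. The remaining blocks, those meeting both cosets of the $\Z_2$-factor, are then designed so that their cross differences tile the non-trivial coset, while their residual straight differences cover exactly the few layer-$0$ elements left behind by the base family (for instance $\pm 3n$ in the lift above), and so that the only group elements ultimately missed are the prescribed holes: the three involutions and the $2e$ non-zero elements of the order-$3$ subgroups. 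The finitely many orders not reached by Theorems~\ref{skolem} and~\ref{langford} --- the explicit $n\in\{2,4,6,8,12,14,20\}$ of Lemma~\ref{skolem+langford}, the case $v=39$ forced by Peltesohn's exception at order $9$, and the base cases $v=3,7,9,15,19$ --- I would settle by direct construction, with the trivial STS$(3)$, the Fano plane as a $3$-pyramidal STS$(7)$ under $\Z_2^2$, and the dihedral STS$(9)$ of Section~2 as the smallest instances.

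I expect the genuine obstacle to be the two mod-$48$ families. There the Sylow $2$-subgroup is strictly larger than a Klein four-group, so the three involutions are no longer simply the non-identity part of a direct $\Z_2^2$ factor: one lone involution now sits inside the cyclic factor. Consequently the cross-coset differences must be arranged to avoid that involution while still covering the whole non-trivial coset, and simultaneously to leave precisely the order-$3$ holes uncovered. This is the delicate bookkeeping that forces the passage through extended Skolem and Langford sequences and the separate handling of a finite list of small orders; once Lemma~\ref{skolem+langford} and an analogous construction for $v\equiv 19\pmod{48}$ are in place, assembling the five families and invoking Theorem~\ref{DF} is routine.
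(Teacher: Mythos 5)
Your overall strategy (reduce to difference families via Theorem~\ref{DF}, then pick a group of order $v-3$ with exactly three involutions for each residue class and build the DF) is the same as the paper's, and your group choices for $v\equiv 7,9,15\pmod{24}$ essentially coincide with its choices ($\Z_2^2\times H$ and $\mathbb{D}_6\times\Z_{4n+1}$). However, there is a fatal error precisely in the two cases you yourself identify as the crux: for $v=48N+3$ and $v=48N+19$ you propose $G=\Z_2\times\Z_{24N}$ and $G=\Z_2\times\Z_{8(3N+1)}$, and over these groups the required $(G,\{2^3,3^e\},3,1)$-DF \emph{provably does not exist}, for any $e$. The obstruction is exactly the parity argument the paper uses in its own non-existence proof (Theorem~\ref{non-existence}, case (iii)): take the index-$2$ subgroup $S=\{0\}\times\Z_{2m}$ of $G=\Z_2\times\Z_{2m}$. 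Every element of odd order lies in $S$, so the order-$3$ subgroups of the spread lie in $S$; but of the three involutions $(1,0),(0,m),(1,m)$, exactly two lie outside $S$. Hence $\Delta\mathcal{F}$ would have to cover exactly $|G\setminus S|-2\equiv 2\pmod 4$ elements of $G\setminus S$, while any triple contributes either $0$ or $4$ differences to $G\setminus S$ (two points in one coset and one in the other give four cross differences). This contradiction kills your construction before any Skolem/Langford bookkeeping can begin.

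The paper avoids this trap by taking $G=\Z_4\times\Z_{12n}$ (for $v\equiv 3\pmod{48}$) and $G=\Z_4\times\Z_{12n+4}$ (for $v\equiv 19\pmod{48}$). In these groups all three involutions lie in $2G$, hence inside \emph{every} index-$2$ subgroup, so the count of elements to be covered in each nontrivial coset is divisible by $4$ and the obstruction disappears; the explicit families are then built from extended Skolem sequences, and (for $v\equiv 3\pmod{96}$) from the $(\Z_{12n},\{3,4\},3,1)$-DF of Lemma~\ref{skolem+langford} lifted into $\Z_4\times\Z_{12n}$, not into $\Z_2\times\Z_{12n}$ as you suggest. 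Two further, more minor points: your use of the cyclic group $\Z_{6n+3}$ for $v\equiv 15\pmod{24}$ forces a special fix at $v=39$ (Peltesohn's exception), which the paper sidesteps by using $\Z_3\times\Z_{2n+1}$ with an explicit one-line family; and in all five cases the actual block sets (patterned starters for cases (i)--(ii), the tabled family for $\mathbb{D}_6\times\Z_{4n+1}$, and the Skolem/Langford assemblies) constitute the real technical content, which your proposal only gestures at. But the group-choice error in the mod-$48$ cases is the decisive gap: no amount of clever bookkeeping can complete the construction over the groups you chose.
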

\begin{proof}\quad Considering that the admissible values of $v$ are those congruent to 1 or 3 (mod 6),
we have to prove that there exists a $3$-pyramidal STS$(v)$ in the following five cases:
\begin{itemize}
\item[$(i)$] $v\equiv 7$ (mod $24$);
\item[$(ii)$] $v\equiv 15$ (mod $24$);
\item[$(iii)$] $v\equiv 9$ (mod $24$);
\item[$(iv)$] $v\equiv 3$ (mod $48$);
\item[$(v)$] $v\equiv 19$ (mod $48$).
\end{itemize}

Cases (i)-(ii). Assume that $v\equiv 7,15$ (mod $24$).
Let $\cal F$ be an $(H,\{h\},3,1)$-DF with $H=\Z_{6n+1}$ and $h=1$ if $v=24n+7$,
or $H=\Z_3\times\Z_{2n+1}$ and $h=3$ if $v=24n+15$.
In the former case the existence of $\cal F$ is guaranteed by Peltesohn's result (see Remark \ref{rem})
while in the latter it is enough to take ${\cal F}=\{\{(0,0),(1,i),(1,-i)\} \ | \ 1\leq i\leq n\}$.

The group $G:=\Bbb Z_2 \times \Bbb Z_2 \times H$ has order $v-3$ and exactly 3 involutions.
Thus, by Theorem \ref{DF}, it is enough to exhibit a $(G,\{2^3,3^e\},3,1)$-DF  for some suitable $e$.
Such a DF is, for instance, the one whose blocks are:

\smallskip
$\{(0,0)\}\times B$ \hfill with $B\in {\cal F}$;

\smallskip
$\{(0,1,0),(1,0,h),(1,1,-h)\}$\quad \hfill with $h\in \overline H$

\smallskip\noindent
where $\overline H$ is a subset of $H$ such that $\{\{h,-h\} \ | \ h\in \overline H\}$ is the {\it patterned starter} of $H$, i.e.,
the set of all symmetric 2-subsets of $H$ (see \cite{Dinitz}).

\bigskip
Case (iii). Assume that $v\equiv 9$ (mod $24$), say  $v=24n+9$. The group  $G:=\mathbb{D}_6\times \Z_{4n+1}$ has order $v-3$ and clearly has exactly three involutions.
Thus, by Theorem \ref{DF}, it is enough to exhibit a $(G,\{2^3,3^1\},3,1)$-DF (of course here the DF will ``use" multiplication on the first component and addition on the second).
For this, we have to distinguish three cases
according to whether $n$ is odd or congruent to 0 or 2 (mod 4). The three cases are very similar;
the blocks of the desired difference family can be taken as indicated in the table below.

\small
\begin{center}
\begin{tabular}{|l|c|r|c|r|c|r|c|r|c|r|c|r|}
\hline {Blocks} & $n$ odd & $n\equiv0$ (mod 4) & $n\equiv2$ (mod 4)      \\
\hline $\{(1,0),(x,n),(x,2n)\}$ & Yes & Yes & Yes   \\
\hline $\{(y,0),(1,- {n+1\over2}),(1,{3n+1\over2})\}$ & Yes & No & No \\
\hline $\{(1,0),(x,-{n\over2}),(x,{3n\over2})\}$ & No & Yes  & Yes \\
\hline $\{(y,0),(1,i),(1,2n+1-i)\}$ & $1 \le i \le n$ & $1 \le i \le n$ & $1 \le i \le n$\\
 & $i\ne {n+1\over2}$ & & \\
\hline $\{(y,0),(x,i),(x^2,-i)\}$ & $1\le i \le n$ & $1\le i \le n$, & $1\le i \le n$,  \\
 & &  $i\ne {n\over4}$ & and $i= {7n+2\over4}$ \\
\hline $\{(y,0),(x,-i),(x^2,i)\}$ & $n+1\le i \le 2n$ & $n+1\le i \le 2n$ & $n+1\le i \le 2n$ \\
& & and $i={n\over4}$ & and  $i\neq {7n+2\over4}$ \\
\hline $\{(1,0),(x,i),(x,2n-i)\}$ &  $1 \le i \le n-1$ &  $1 \le i \le n-1$ &  $1 \le i \le n-1$\\
& & and $i\ne {n\over2}$ & and $i\ne {n\over2}$ \\
\hline
\end{tabular}
\end{center}

\medskip\normalsize
We note that the subgroup of order 3 which is not covered by the differences of the above families is, in
any subcase, $\{(1,0),(x,0),(x^2,0)\}$.

\bigskip\noindent
Case (iv). Assume that $v\equiv3$ (mod 48), say  $v=48n+3$. The group $G=\Z_4\times\Z_{12n}$ has order $v-3$ and it has exactly three involutions.
Then, by Theorem \ref{DF}, it is enough to exhibit a $(G,\{2^3,3\},3,1)$-DF.

\medskip\noindent
Subcase (iv.1): $n$ is odd.

Take a $(2n+1)$-extended Skolem sequence $(s_1,...,s_{2n-1})$ of order $2n-1$ (which exists by Theorem \ref{skolem}).
Set $n=2t+1$ and check that the blocks of a $(G,\{2^3,3\},3,1)$-DF are the following:

$\{(0,0),(1,0),(3,6t+3)\};$

\smallskip
$\{(0,0),(1,3t+2),(1,-9t-5)\}$;

\smallskip
$\{(0,0),(1,i),(3,12t+7-i)\}$\hfill with $1 \le i \le 6t+3$ and $i\ne 3t+2;$

\smallskip
$\{(0,0),(1,6t+3+i),(3,6t+3-i)\}$\hfill with $1\le i \le  6t+2;$

\smallskip
$\{(0,0),(0,i),(0,-s_i-4t-1)\}$\hfill with $1\le i \le 4t+1$.

\medskip\noindent
Subcase (iv.2): $n$ is even.

Take a $(\Z_{12n},\{3,4\},3,1)$-DF $\cal F$ using Lemma \ref{skolem+langford}.
One can see that a $(G,\{2^3,3\},3,1)$-DF
is the one whose blocks are the following.

\smallskip
$\{(0,0),(1,0),(1,9n)\};$

\smallskip
$\{0\}\times B$ \hfill with $B\in{\cal F}$;

\smallskip
$\{(0,0),(1,i),(3,6n+1-i)\}$  \hfill with $1 \le i \le 3n$;

\smallskip
$\{(0,0),(1,i),(3,6n-i)\}$ \hfill with $3n+1 \le i \le 6n-1$.

\bigskip\noindent
Case (v). Assume that $v\equiv19$ (mod 48), say $v=48n+19$.
The group $G=\Z_4\times\Z_{12n+4}$ has order $v-3$ and it has exactly three involutions.
Then, by Theorem \ref{DF}, it is enough to exhibit a $(G,\{2^3\},3,1)$-DF.
Let $(s_1,s_2,...,s_{2n})$ be any $(n+1)$-extended Skolem sequence of
order $2n$ (which exists by Theorem \ref{skolem}).
Then the required difference family is the one whose blocks are the following:

\smallskip
$\{(0,0),(1,0),(1,3n+1)\}$;

\smallskip
$\{(0,0),(0,i),(0,-s_i-2n)\}$\hfill with $1 \leq i\leq 2n$;

\smallskip
$\{(0,0),(1,i),(3,6n+2-i)\}$\hfill wtih $1 \leq i \leq 3n$;

\smallskip
$\{(0,0),(1,6n+3-i),(3,i)\}$\hfill with $1 \leq i \leq 3n+1$.

\end{proof}

\section{Conclusion}

Theorem \ref{non-existence} and Theorem \ref{existence} are the ``if part" and the ``only if part" of
the main result Theorem \ref{main} which therefore is now completely proved.

The existence of a 3-pyramidal STS$(v)$ can be summarized in the following table
where, in the third column, we put a group acting 3-pyramidally on a STS$(v)$.

\medskip
\begin{center}
\begin{tabular}{|l|c|r|c|r|c|r|c|r|c|r|c|r|}
\hline {$v$} & Existence & Group     \\
\hline $24n+1$ & No & $-$   \\
\hline $24n+3$ & Yes $\Longleftrightarrow$ $n$ is even & $\Z_4\times\Z_{6n}$\\
\hline $24n+7$ & Yes & $\Z_2^2\times\Z_{6n+1}$\\
\hline $24n+9$ & Yes & $\mathbb{D}_6\times\Z_{4n+1}$\\
\hline $24n+13$ & No & $-$  \\
\hline $24n+15$ & Yes & $\Z_2^2\times\Z_3\times\Z_{2n+1}$\\
\hline $24n+19$ & Yes $\Longleftrightarrow$ $n$ is even & $\Z_4\times\Z_{6n+4}$\\
\hline $24n+21$ & No & $-$  \\
\hline
\end{tabular}
\end{center}

Note that an abelian group of order $24n+6$ has only one involution
so that there is no STS$(24n+9)$ which is 3-pyramidal under an abelian group.
Thus we see, from the above table, that there exists a STS$(v)$ which is 3-pyramidal under
an abelian group if and only if $v\equiv7, 15$ (mod 24) or $v\equiv 3, 19$ (mod 48).







\end{document}